\documentclass[11pt]{article}
\usepackage{amsmath,amssymb,amscd,latexsym,amsthm,mathrsfs,amsfonts}
\usepackage[unicode]{hyperref}
\usepackage{hypbmsec,longtable}
\usepackage{graphicx}
\usepackage{fancyhdr}
\textheight22cm \textwidth15cm \hoffset-1.7cm \voffset-.5cm
\ifx\pdfoutput\undefined\else
\hypersetup{
colorlinks=true,
linkcolor=blue,
citecolor=blue,
urlcolor=blue,
filecolor=blue,
bookmarksnumbered=true,
pdfstartview=FitH,
pdfhighlight=/N
}
\fi
\usepackage{pgf,tikz}

\usepackage{array}
\allowdisplaybreaks[1]


\newtheorem{thm}{Theorem}[section] 


\newtheorem{lemma}[thm]{Lemma} 



\theoremstyle{definition}


\newtheorem{defn}[thm]{Definition} 

\usepackage{cite}

\DeclareGraphicsRule{.tif}{png}{.png}{`convert #1 `dirname #1`/`basename #1 .tif`.png}
\renewcommand{\author}[1]{\large\rm #1\\ \bigskip}
\newcommand{\address}[1]{{\normalsize\it #1\\}\bigskip}
\renewcommand{\title}[1]{\bigskip\bigskip\Large\bf #1\bigskip\bigskip\\}

\begin{document}

\vglue .3 cm

\begin{center}

\title{A Cayley graph for $F_{2}\times F_{2}$ which is not minimally almost convex.}
\author{Andrew Elvey Price\footnote[1]{website: \hyperlink{https://www.idpoisson.fr/elveyprice/en/}{https://www.idpoisson.fr/elveyprice/en/}\\$~~~~~~~$email: andrew.elvey@univ-tours.fr}}
\address{CNRS, Institut Denis Poisson, Universit\'e de Tours,\\ Parc Grandmont, 37200 Tours, France}

\end{center}
\setcounter{footnote}{0}

\begin{abstract}
We give an example of a Cayley graph $\Gamma$ for the group $F_{2}\times F_{2}$ which is not minimally almost convex (MAC). On the other hand, the standard Cayley graph for $F_{2}\times F_{2}$ does satisfy the falsification by fellow traveler property (FFTP), which is strictly stronger. As a result, any Cayley graph property $K$ lying between FFTP and MAC (i.e., $\text{FFTP}\Rightarrow K\Rightarrow\text{MAC}$) is dependent on the generating set. This includes the well known properties FFTP and almost convexity, which were already known to depend on the generating set as well as Po\'{e}naru's condition $P(2)$ and the basepoint loop shortening property for which dependence on the generating set was previously unknown. We also show that the Cayley graph $\Gamma$ does not have the loop shortening property, so this property also depends on the generating set.
\end{abstract}

\section{Introduction}

Over the years numerous Cayley graph properties have been introduced for the sake of better understanding the Cayley graphs with these properties, and their underlying groups. For any such property one can ask the natural question: is the Cayley graph property independent of the generating set? In this article we use the group $F_{2}\times F_{2}$ to show that the answer is no for the falsification by fellow traveller property (FFTP), the loop shortening property and every non-trivial almost-convexity condition. 
In order to prove this, we consider two different presentations for the group $F_{2}\times F_{2}$, whose Cayley graphs exhibit quite different geometry. First we consider the standard presentation 
\[(G,S_{1})=\langle x,y,c,t|xc=cx,yc=cy,xt=tx,yt=ty\rangle.\]
Note that this is a right angled Artin group (with standard presentation), so by \cite[Theorem 3.1]{artinFFTP}, the pair $(G,S_{1})$ satisfies the falsification by fellow traveller property. The other presentation that we will consider is
\[(G,S_{2})=\langle a,b,c,t|ac=ca,bc=cb,act=tac,bct=tbc\rangle,\]
which can be seen to define the same group due to the Tietze transformations $y=bc$ and $x=ac$. 
Our main theorem is that the pair $(G,S_{2})$ is not minimally almost convex. As a corollary, the following properties all depend on the generating set:
\begin{itemize}
	\item The falsification by fellow traveler property
	\item The basepoint loop shortening property
	\item Every non-trivial almost convexity condition. This includes almost convexity, Po\'{e}naru's condition $P(2)$, minimal almost convexity (MAC) and the slightly stronger condition M$'$AC.
\end{itemize}
In the final section we show that $(G,S_{2})$ does not satisfy the loop shortening property, so this property also depends on the generating set.

For groups satisfying FFTP with respect to one generating set but not others, Neumann and Shapiro gave an example in \cite{FFTP} when they introduced the property, while another example with the same underlying group is used in \cite{regularFFTP}. To the author's knowledge, the only previous example for almost convexity was given by Theil \cite{Thiel}. Elder and Hermiller used solvable Baumslag-Solitar groups to show that minimal almost convexity is not a quasi-isometry invariant, nor is M$'$AC \cite{MAC}, so it is not surprising that these both depend on the generating set.  It was previously unknown, however, whether Po\'{e}naru's condition P(2) or either of the loop shortening properties were dependent on the generating set.

\section{The path metric and almost convexity conditions}

The properties considered in this article require us to interpret the Cayley graph $\Gamma$ as a metric space, which we denote $\tilde{\Gamma}$. Edges are then considered to be segments of length 1 between the vertices that they join and the points in this metric space are the vertices of $\Gamma$ as well the interior points of the edges. Assuming $\Gamma$ is a connected graph, any two points $u,v$ in $\tilde{\Gamma}$ are joined by at least one continuous path. We define the distance $d(u,v)$ between $u$ and $v$ to be the length of the shortest such path. This is sometimes called the {\em path metric}. Since the elements of the underlying group $G$ are precisely the vertices of $\Gamma$, these are also points in the metric space $\tilde{\Gamma}$. In particular the identity $1$ of $G$ is a point in $\tilde{\Gamma}$. For convenience, we write $|z|=d(1,z)$ for $z\in\tilde{\Gamma}$. For $g\in G$, the  value $|g|$ can alternatively be seen as the length of the shortest word representing $g$ over the generating set. By the symmetry of the Cayley graph, we have $d(g_{1},g_{2})=d|g_{1}^{-1}g_{2}|$ for $g_{1},g_{2}\in G$.
\begin{defn} The {\em sphere} $Sph(r)$ and the {\em ball} $\overline{B(r)}$ of radius $r$ around $1$ are defined by:
\[Sph(r)=\{z\in\tilde{\Gamma}:|z|=r\}~~~~~~~~\text{and}~~~~~~~~\overline{B(r)}=\{z\in\tilde{\Gamma}:|z|\leq r\}.\]
\end{defn}

Strictly speaking the paths in $\tilde{\Gamma}$ are continuous objects, however since we will only be interested in paths that are in some sense minimal, we can safely assume that all paths considered in this article are made up of a sequence of joined edges, rather than occasionally backtracking in the middle of an edge. The paths can then be defined by the starting point and the word over the generating set representing the path. The Cayley graphs that we consider have no double edges, so can alternatively describe paths between vertices in $\tilde{\Gamma}$ by the sequence of adjacent vertices they pass through. The restriction that a path lies inside the ball $B(r)$ is equivalent to these vertices all lying inside $B(r)$ and no two adjacent vertices in the path both lying on $Sph(r)$, as then the edge between them would be outside $B(r)$. For our purposes this distinction is not important as the Cayley graph $\Gamma$ that we consider is bipartite, so no two adjacent vertices could simultaneously lie on $Sph(r)$.

We are now ready to define {\em almost convexity}.
\begin{defn}Let $G$ be a group and let $S$ be a finite generating set for $G$. Let $r_{0}\in\mathbb{N}$ be a positive integer and let $f:\mathbb{N}\to\mathbb{N}$ be a function. The pair $(G,S)$ is said to satisfy the almost convexity condition $\text{AC}_{f,r_{0}}$ if the following holds: If $r\in\mathbb{Z}$ satisfies $r>r_{0}$ and $u,v\in Sph(r)$ satisfy $d(u,v)\leq 2$, then there is some path $p$  between $u$ and $v$ such that $p$ is contained in $\overline{B(r)}$ and $p$ has length at most $f(r)$.

The pair $(G,S)$ is said to satisfy
\begin{itemize}
\item {\em Almost Convexity (AC)} if it satisfies $\text{AC}_{f,r_{0}}$ for some constant function $f$ and some $r_{0}>0$,
\item {\em Po\'{e}naru's condition $P(2)$} if it satisfies $\text{AC}_{f,r_{0}}$ for some sub-linear function $f$ and some $r_{0}>0$,
\item {\em Minimal Almost Convexity (MAC)} if it satisfies $\text{AC}_{f,r_{0}}$ for some $r_{0}>0$, where $f(x)=2x-1$,
\item {\em M'AC} if it satisfies $\text{AC}_{f,r_{0}}$ for some $r_{0}>0$, where $f(x)=2x-2$.
\end{itemize}\end{defn}


Note that the points $u$ and $v$ in the definition above are joined via the origin by a path of length $2r$, so every finitely generated group trivially satisfies any almost convexity condition where $f(r)\geq 2r$. Hence, the requirement that $f(r)=2r-1$ for MAC is the weakest possible non-trivial almost convexity condition. Due to the successive strengthening of these conditions, we have the implication chain
\[\text{AC}\Rightarrow\text{P(2)}\Rightarrow\text{M$'$AC}\Rightarrow\text{MAC}.\]

The last two conditions M$'$AC and MAC were introduced by I. Kapovich in \cite{K2}, where they are called $K(2)$ and $K'(2)$, respectively. These conditions are discussed in greater detail in \cite{MAC}, where Elder and Hermiller show that M$'$AC does not imply $P(2)$. It is still an open question, however, whether the properties M$'$AC and MAC are equivalent.  The property AC was introduced by Cannon in 1987 \cite{cannon}, and it has been widely studied ever since \cite{loop_shortening,BSnotAC,ACnew}.

\section{The pair $(G,S_{2})$ does not satisfy MAC}

 The main theorem in this section is that the group $(G,S_{2})$ with presentation 
\[\langle a,b,c,t|ac=ca,bc=cb,act=tac,bct=tbc\rangle\]
does not satisfy MAC. As a consequence it does not satisfy AC, $P(2)$ or M'AC. As stated in the introduction, a different Cayley graph for the same group satisfies all four of these properties, which proves that these properties are generating set dependent. 

We start by interpreting $(G,S_{2})$ as a HNN extension, as this will allow us to understand the paths in the Cayley graph using Britton's Lemma, which we state below. First let us give a standard definition of HNN extension (See \cite[Chapter IV.2]{combigrouptheory}).
\begin{defn} Let $G$ be a group with presentation $\langle S\mid R\rangle$ and let $A$ and $B$ be isomorphic subgroups of $G$, with $\phi:A\to B$ an isomorphism. We define the HNN extension $G*_{\phi}$ to be the group with presentation
\[\langle S\cup\{t\}\mid R,t^{-1}at=\phi(a)\forall a\in A\rangle.\]
Note that if $S_{A}$ is a finite generating set for $A$ and $S$ and $R$ both finite, we have a finite presentation for $G*_{\phi}$ given by:
\[\langle S\cup\{t\}\mid R,t^{-1}at=\phi(a)\forall a\in S_{A}\rangle.\]
We will call this a standard presentation for the HNN extension.\end{defn}
Consider the group $P=F_{2}\times C_{\infty}=\langle a,b,c|ac=ca,bc=cb\rangle$ and let $H$ be the subgroup generated by $ac$ and $bc$. Then $G$ is the HNN extension $P*_{\phi}$ where $\phi:H\to H$ is the identity, and $(G,S_{2})$ is its standard presentation.
\begin{lemma}(Britton's Lemma) Assume $w\in G$ is given by the product
\[w=p_{0}t^{\epsilon_{1}}p_{1}t^{\epsilon_{2}}\cdots t^{\epsilon_{n}}p_{n},\]
where $n\geq 1$, each $p_{j}\in P$ and each $\epsilon_{j}=\pm1$. Assume further that there is no $1\leq j\leq n$ satisfying both $p_{j}\in H$ and $\epsilon_{j}=-\epsilon_{j+1}$. Then $w\neq 1$.\end{lemma}
Note that if some $j$ satisfied both $p_{j}\in H$ and $\epsilon_{j}=-\epsilon_{j+1}$, then the word defining $w$ could be reduced, as we would have $t^{\epsilon_{j}}p_{j}t^{\epsilon_{j+1}}=p_{j}$, so Britton's Lemma can be interpreted as saying that this is the only situation in which a word can be reduced. In particular, repeating these reductions as long as they are possible allows one to efficiently determine whether a word represents the identity in $G$.

In order to use Britton's Lemma to understand the metric $d$ in $\tilde{\Gamma}$, we first need to understand the subgroup $H$ of $P$, which we do in the following Lemma:

\begin{lemma} $H$ is the subgroup of $G$ consisting of all elements $wc^{k}$, where $w$ is a word in $\{a,b,a^{-1},b^{-1}\}^{*}$ and $k$ is the sum of the exponents in $w$.\end{lemma}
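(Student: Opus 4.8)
The plan is to identify $H$ with the kernel of a natural homomorphism on $P$ and then verify the two resulting inclusions. Since $P=F_{2}\times C_{\infty}$ with $c$ central, every element of $P$ has a unique normal form $wc^{k}$, where $w$ is a reduced word in $\{a,b,a^{-1},b^{-1}\}$ and $k\in\mathbb{Z}$; moreover $P$ embeds in the HNN extension $G$, so it suffices to work inside $P$. Let $\sigma:F_{2}\to\mathbb{Z}$ be the exponent-sum homomorphism, determined by $\sigma(a)=\sigma(b)=1$, and set $N=\{wc^{k}\in P:k=\sigma(w)\}$. The lemma asserts exactly that $H=N$.

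First I would show $H\subseteq N$. Define $\psi:P\to\mathbb{Z}$ by $\psi(wc^{k})=\sigma(w)-k$; this is a homomorphism because $\sigma$ is one and $c$ is central, and by construction $N=\ker\psi$. Since $\psi(ac)=\sigma(a)-1=0$ and $\psi(bc)=\sigma(b)-1=0$, the generators of $H$ lie in $\ker\psi$, whence $H\subseteq N$. This step also records, for free, that $N$ is a subgroup.

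For the reverse inclusion $N\subseteq H$, take an arbitrary element $wc^{\sigma(w)}$ of $N$ and write $w=\ell_{1}\ell_{2}\cdots\ell_{n}$ with each $\ell_{i}\in\{a,b,a^{-1},b^{-1}\}$. Assign $\varepsilon_{i}=+1$ if $\ell_{i}\in\{a,b\}$ and $\varepsilon_{i}=-1$ otherwise, so that $\varepsilon_{1}+\cdots+\varepsilon_{n}=\sigma(w)$. Using centrality of $c$ to redistribute the copies of $c$, I obtain $wc^{\sigma(w)}=(\ell_{1}c^{\varepsilon_{1}})(\ell_{2}c^{\varepsilon_{2}})\cdots(\ell_{n}c^{\varepsilon_{n}})$, and each factor $\ell_{i}c^{\varepsilon_{i}}$ is one of $ac$, $bc$, $(ac)^{-1}=a^{-1}c^{-1}$, $(bc)^{-1}=b^{-1}c^{-1}$. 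Hence $wc^{\sigma(w)}$ is a product of generators of $H$ and their inverses, so $N\subseteq H$, completing the proof.

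There is no serious obstacle here; the lemma is essentially bookkeeping once the direct-product structure of $P$ is exploited. The only points needing a little care are that the normal form $wc^{k}$ and the exponent sum $\sigma$ are genuinely well defined—which is exactly the content of $P$ being the direct product $F_{2}\times C_{\infty}$, so that $\sigma$ is unchanged under free reduction—and that the identification of $H$ inside $G$ is legitimate, which follows from the standard embedding of the base group $P$ into its HNN extension $G$.
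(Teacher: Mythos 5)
Your proof is correct, and it is worth separating its two halves when comparing with the paper. The inclusion $N\subseteq H$ is exactly the paper's computation: both arguments redistribute the central letter $c$ to rewrite $wc^{\sigma(w)}$ as $(\ell_{1}c^{\varepsilon_{1}})(\ell_{2}c^{\varepsilon_{2}})\cdots(\ell_{n}c^{\varepsilon_{n}})$, a product of $ac$, $bc$ and their inverses. Where you genuinely diverge is the inclusion $H\subseteq N$: the paper proves it by direct computation, writing an arbitrary $h\in H$ as $(s_{1}c)^{p_{1}}(s_{2}c)^{p_{2}}\cdots(s_{n}c)^{p_{n}}$ and commuting the $c$'s to the right to read off the form $wc^{k}$, whereas you package the same bookkeeping as the statement that $N$ is the kernel of the homomorphism $\psi(wc^{k})=\sigma(w)-k$ on $P$ and simply check that $\psi$ kills the two generators of $H$. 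The content is identical---verifying $\psi$ on generators is the same commutation computation---but your version buys a few things the paper leaves implicit: it records for free that $N$ is a subgroup, it isolates the well-definedness point (uniqueness of the normal form $wc^{k}$ in $P=F_{2}\times C_{\infty}$, with $\sigma$ invariant under free reduction), and it makes explicit the standard embedding of the base group $P$ into the HNN extension $G$, which the paper's proof silently assumes. As a final point of consonance, the paper itself deploys your homomorphism later: in the proof of Theorem 4.2 it introduces $h:G\to C_{1}$ with $h(a)=h(b)=e$, $h(c)=e^{-1}$, $h(t)=1$ and uses that $H$ lies in $\ker h$; your $\psi$ is precisely the restriction of that map to $P$, written additively.
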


\begin{proof}Let $w$ be a word in $\{a,b,a^{-1},b^{-1}\}^{*}$ and let $k$ be the sum of the exponents in $w$. We will show that $wc^{k}\in H$. Let $w=s_{1}^{p_{1}}s_{2}^{p_{2}}\ldots s_{n}^{p_{n}}$, where each $s_{i}\in\{a,b\}$ and each $p_{i}\in\{-1,1\}$. Then
\[wc^{k}=wc^{p_{1}+\ldots+p_{n}}=(s_{1}c)^{p_{1}}(s_{2}c)^{p_{2}}\ldots(s_{n}c)^{p_{n}}\in H.\]
Now let $h\in H$. We will show that $h=wc^{k}$ for some word $w\in\{a,b,a^{-1},b^{-1}\}^{*}$, where $k$ is the sum of the exponents in $w$. Since $h\in H=\langle ac,bc\rangle$, we can write 
\[h=(s_{1}c)^{p_{1}}(s_{2}c)^{p_{2}}\ldots(s_{n}c)^{p_{n}},\]
where each $s_{i}\in\{a,b\}$ and each $p_{i}\in\{-1,1\}$. Then we have
\[h=s_{1}^{p_{1}}s_{2}^{p_{2}}\ldots s_{n}^{p_{n}}c^{p_{1}+p_{2}+\ldots+p_{n}}=wc^{k},\] 
where $w=s_{1}^{p_{1}}s_{2}^{p_{2}}\ldots s_{n}^{p_{n}}$ and $k$ is the sum of the exponents in $w$.
\end{proof}

Before proving the main theorem, we will prove some Lemmas regarding distances and paths in the metric space $\tilde{\Gamma}$:

\begin{lemma}\label{Lem:sheet_jump} Let $g_{1}\in P$ and let $g_{2}\in tP$, where $P$ is the subgroup of $G$ generated by $a$, $b$ and $c$. Then for any path $p$ in $\tilde{\Gamma}$ from $g_{2}$ to $g_{1}$, there is some $h\in H$ such that $p$ passes through the vertices $ht=th$ and $h$ in that order as well as the edge joining these points.\end{lemma}
\begin{proof}
Let $w_{1}$ and $w_{2}$ be words over the alphabet $\{a,a^{-1},b,b^{-1},c,c^{-1}\}$ representing $g_{1}^{-1}$ and $t^{-1}g_{2}$, respectively, which is possible since $g_{1}^{-1},t^{-1}g_{2}\in P$. Moreover, let $w_{3}$ be a word representing the path $p$. Then $w=tw_{2}w_{3}w_{1}$ is a word representing the identity. Let $\ell$ be the corresponding loop in $\tilde{\Gamma}$. Applying Britton's Lemma, there must be some subword $tw't^{-1}$ or $t^{-1}w't$ of $w$, with $w'$ a word over the alphabet $\{a,a^{-1},b,b^{-1},c,c^{-1}\}$ representing an element of $H$. We can then replace this subword with $w'$ without changing the group element represented. According to Britton's Lemma this can be repeated until the letters $t$ and $t^{-1}$ no longer appear in the word. At the stage just before the initial letter $t$ is removed, it must appear in a subword $tw't^{-1}$, with $w'$ representing an element $h\in H$. So at this point the entire word is $tw't^{-1}w''$, with $w''$ representing $h^{-1}$ and $t^{-1}$ representing a step from $th=ht$ to $h$. Reversing each reduction up to this stage of the algorithm replaces $w'$ or $w''$ with a longer word representing the same element, while the letter $t^{-1}$ still represents a step from $th$ to $h$. Hence this step appears in the original word $w$. Since $w_{1}$ and $w_{2}$ do not contain the letter $t^{-1}$, this occurrence of the letter $t^{-1}$ must appear in the word $w_{3}$ and hence the corresponding step from $th$ to $h$ must appear in the path $p$.
\end{proof}
\begin{lemma}\label{Lem:tlong}For $g\in P$, we have $|gt|=|g|+1$.
\end{lemma}
\begin{proof}
Since $gt$ and $g$ are adjacent, we clearly have $|gt|\leq|g|+1$. Now let $p$ be a path from $g^{-1}$ to $t$ of length $|gt|$. By Lemma \ref{Lem:sheet_jump}, there is some $h\in H$ such that $p$ passes through $h$ and $ht=th$ in that order. Since $p$ is a geodesic, 
\[|gt|=d(g^{-1},h)+1+d(th,t)=d(g^{-1},h)+d(h,1)+1\geq d(g^{-1},1)+1=|g|+1,\]
where the last inequality is by the triangle inequality.\end{proof}
The following lemma describes distances between elements of the subgroup $P$ of $G$. In its proof we will use the quotient $f:G\to P$ defined by $f(a)=a$, $f(b)=b$, $f(c)=c$ and $f(t)=1$. Note that this is well defined since $f(ac)=f(ca)$, $f(bc)=f(cb)$, $f(act)=f(tac)$ and $f(bct)=f(tbc)$.

\begin{lemma}\label{Lem:length_in_P} Let $j,k\in\mathbb{Z}$ with $j\geq0$ and let $w$ be a freely reduced word of length $j$ over the alphabet $\{a,a^{-1},b,b^{-1}\}$. Then $|\overline{w}c^{k}|=j+|k|$.  
\end{lemma}
\begin{proof}
Let $w_{1}$ be the word $w$ followed by $k$ copies of $c$ if $k\geq 0$ and $-k$ copies of $c^{-1}$ if $k<0$.
Since $P=F_{2}\times C_{\infty}$, the word $w_{1}$ is a shortest word for $g=\overline{w}c^{k}$ over the alphabet $\{a,a^{-1},b,b^{-1},c,c^{-1}\}$. Moreover the length of this word is $j+|k|$, so it suffices to prove that there is no shorter word for $g$ including the letters $t$ and/or $t^{-1}$. Suppose $w_{2}$ is a word representing the same group element $g$. Then, since $g\in P$, we have $f(g)=g$, but applying $f$ to $w_{2}$ simply removes every letter $t$ and $t^{-1}$, yielding a shorter word which only contains letters in $\{a,a^{-1},b,b^{-1},c,c^{-1}\}$. Therefore the shorter word, and hence $w_{2}$ must have length at least $j+|k|$, completing the proof of the lemma.
\end{proof}
\begin{center}
\begin{figure}
   \centering
   \includegraphics[width=4in]{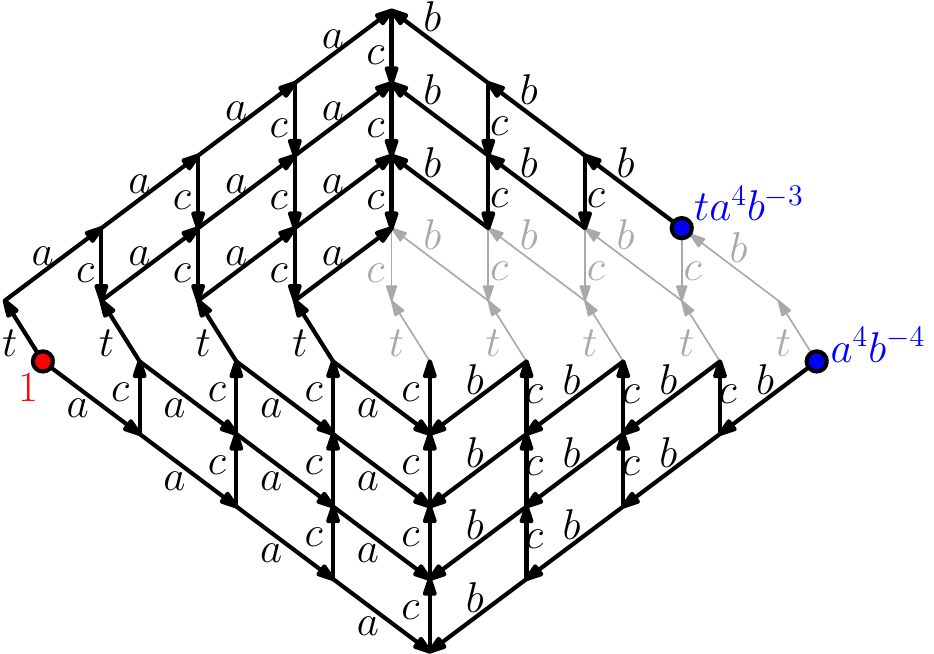} 
   \caption{Part of the Cayley graph $\Gamma(G,S_{2})$. The bold vertices and edges are those within the ball of radius 8.}
   \label{fig:ball}
\end{figure}
\end{center}
Finally we are ready to prove our main theorem:
\begin{thm} The pair $(G,S_2)$ does not satisfy MAC.\end{thm}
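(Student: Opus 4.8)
The plan is to refute $\text{AC}_{2r-1,r_0}$ for every $r_0$ by producing, for arbitrarily large $r$, a pair $u,v\in Sph(r)$ with $d(u,v)=2$ for which every path inside $\overline{B(r)}$ has length at least $2r$; since the path through the identity already has length $2r$, this is exactly the failure of MAC. I would carry out all computations in the coordinates coming from the decomposition $G=\langle ac,bc\rangle\times\langle c,t\rangle$, equivalently in the Britton normal form furnished by the Lemma: the Lemma detects precisely when a subword lies in $H$, which is what one needs to read off reduced forms, geodesic lengths, and pinches. The geometric engine is the hexagon $act=tac$, which makes $t$ commute past the diagonal generator $ac$ although $t$ commutes with neither $a$ nor $c$ alone. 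At the smallest scale this already yields a seed failure: $u=ac$ and $v=ta$ lie on $Sph(2)$, satisfy $d(u,v)=2$ (via the out-of-ball apex $act$), and their only non-increasing moves land in the disjoint sets $\{a,c\}$ and $\{t\}$, which meet only at the identity, so the in-ball distance is $4=2r$.

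A first key step is a clean length formula. Writing a target as $(f,f')$, each of $a^{\pm},b^{\pm}$ spends one letter on $f$ and simultaneously contributes a $c^{\mp}$ to $f'$, while $c^{\pm},t^{\pm}$ only move $f'$; minimising over interleavings shows that $|(f,f')|$ equals $|f'|_{F'}$ when $|f|_F$ does not exceed the total negative $c$-exponent available in $f'$, and otherwise equals $2|f|_F+p(f')+\theta(f')$, where $p$ counts $t$-letters and $\theta$ is the net $c$-exponent. This formula makes everything computable: it identifies the sphere, and, decisively, it encodes the absorption phenomenon by which the $c^{\mp}$ released by an $a^{\pm}$ can be soaked up by a $c$-run of $f'$ at no net cost. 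Almost every naive candidate family fails because of exactly this absorption, which is why the formula must be established first and then used as the main combinatorial control.

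The witnesses must then be chosen as a family $u_n,v_n$ with (i) a common radius $r_n\to\infty$, (ii) $u_n^{-1}v_n$ of length $2$, and (iii) the single short modification carrying $u_n$ to $v_n$ executable inside $\overline{B(r_n)}$ only after the entire ``core'' of $u_n$ has been unwound to the identity. Property (iii) is the whole game, and it must be engineered against two shortcuts I would have to exclude at every scale: a \emph{common ancestor}, i.e. a nontrivial $p$ on geodesics to both points, which yields a detour of length $2(r_n-|p|)$; and a \emph{descend–modify–ascend} shortcut, i.e. shedding a few letters of $u_n$ to make room, performing the length-$2$ modification at smaller radius, and climbing back. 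The seed pair beats both only because its radius is entirely accounted for by one unit of the $F$-core, so that making any room at all for the modification forces a descent all the way to the origin; the family must reproduce this rigidity, tying the modification's cost to the \emph{full} core rather than to a peelable outermost factor.

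The main obstacle is precisely this lower bound, and I expect it to be the calculation-heavy heart of the proof. The natural approach is to take an arbitrary path from $u_n$ to $v_n$ inside $\overline{B(r_n)}$, track the pair (normal form, radius) along it, and argue from the length formula that the distinguishing $t$-syllable or $c$-run cannot be created or transported while the core is present without the radius exceeding $r_n$; hence the path must first dismantle the core, paying at least $r_n$ going down and $r_n$ coming back. Showing that the chosen $u_n,v_n$ genuinely have no common ancestor and admit no absorption shortcut — that is, controlling \emph{all} interleavings simultaneously rather than the few obvious ones — is the delicate part, and it is exactly here that the explicit description of $H=\{wc^{k}:k=\operatorname{exp}(w)\}$ from the Lemma is indispensable.
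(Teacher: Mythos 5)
There is a genuine gap — in fact two. First, the length formula you designate as ``the main combinatorial control'' is false as stated. In the coordinates $A=\overline{ac}$, $B=\overline{bc}$, take the element $\overline{ab^{-1}}=(AB^{-1},1)$: here $|f|_F=2$ exceeds the (zero) negative $c$-exponent available in $f'=1$, so your second branch predicts length $2|f|_F=4$, yet $|ab^{-1}|=2$. Conversely $(AB^{-1},c^{-2})$ falls into your first branch (predicted length $|f'|_{F'}=2$), but its abelianized image in $\mathbb{Z}^4$ has $\ell_1$-norm $4$, so its length is $4$. The error is that a letter $a^{\varepsilon}$ or $b^{\varepsilon}$ emits $c^{-\varepsilon}$: emitted $c$'s from letters of \emph{opposite sign} cancel one another, so absorption is governed by exponent sums rather than by $|f|_F$ against $c$-runs of $f'$. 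This signed bookkeeping is exactly what the paper's Lemma encodes ($wc^{k}\in H$ iff $k$ is the exponent sum of $w$), and any corrected formula must be built around it; your planned use of the formula to locate spheres and control interleavings collapses without this repair.

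Second, and more fundamentally, you never exhibit the witnesses: items (i)--(iii) are desiderata, and you explicitly defer the in-ball lower bound (``I expect it to be the calculation-heavy heart of the proof''), which is the entire content of the theorem. The seed pair at $r=2$ proves nothing by itself, since MAC permits an arbitrary threshold $r_0$. The paper's proof supplies both missing pieces cheaply: take $u_n=a^{n}b^{-n}$ and $v_n=ta^{n}b^{-n+1}$, both of length $2n$ with $d(u_n,v_n)=2$; any path $p$ inside $B(2n)$ must cross between the two sheets of the HNN extension along an edge $(h,ht)$ with $h\in H$ and $|h|\leq 2n-1$, and then a short free-reduction estimate — writing $h=wc^{k}$, letting $x$ be the number of leading $a$'s of $w$, and noting $2n-1\geq |h|\geq 2x$ forces $x<n$, so that $b^{n}a^{x-n}w_{1}$ is freely reduced — gives $d(a^{n}b^{-n},h)\geq 2n$ and hence $|p|\geq 4n>2(2n)-1$. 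This single crossing-edge argument is global and avoids entirely your proposed step-by-step tracking of normal forms and radii along the path, which as sketched (``argue the path must first dismantle the core'') is not an argument but a restatement of the goal, and would be far harder to make rigorous. Your instincts — the direct-product coordinates, the HNN sheet structure, the centrality of the Lemma — are correct; what is missing is the explicit family and the actual distance estimate at the crossing vertex, without which the proof does not exist.
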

\begin{proof} For a positive integer $n$, consider the elements $a^{n}b^{-n}$ and $ta^{n}b^{-n+1}$ in $G$ (See Figure \ref{fig:ball}). By Lemmas \ref{Lem:tlong} and \ref{Lem:length_in_P},
\[|a^{n}b^{-n}|=2n=|ta^{n}b^{-n+1}|,\]
so these elements are on the sphere $Sph(2n)$. Moreover, the distance between these points is
\[d(a^{n}b^{-n},ta^{n}b^{-n+1})=d(ta^{n}b^{-n}t^{-1},ta^{n}b^{-n+1})=|tb|=2.\]
So it suffices to prove that if $p$ is a path in $B(2n)$ between $a^{n}b^{-n}$ and $ta^{n}b^{-n+1}$, then $p$ has length at least $4n$. 

Let $p$ be a path in $B(2n)$ from $ta^{n}b^{-n+1}$ to $a^{n}b^{-n}$. By Lemma \ref{Lem:sheet_jump}, there is some $h\in H$ such that $p$ contains $ht$ and $h$ in that order. Moreover, since $p$ is contained in $B(2n)$, we have $|ht|\leq 2n$, so by Lemma \ref{Lem:tlong}, $|h|=|ht|-1\leq 2n-1$. Let $h=wc^{k}$ where $w$ is a freely reduced word over the alphabet $\{a,b,a^{-1},b^{-1}\}$ and $k$ is the sum of the exponents in $w$. Let $j$ be the length of $w$.  Then by Lemma \ref{Lem:length_in_P}, $j+|k|=|h|\leq 2n-1$.

We can bound $d(ht,ta^{n}b^{-n+1})$ using $d(a^{n}b^{-n},h)$, since
\[d(ht,ta^{n}b^{-n+1})=d(h,a^{n}b^{-n+1})\geq d(h,a^{n}b^{-n})-1,\] so the length $|p|$ of $p$ is at least
\[d(a^{n}b^{-n},h)+1+d(ht,ta^{n}b^{-n+1})\geq 2d(a^{n}b^{-n},h),\]
hence it suffices to show that $d(a^{n}b^{-n},h)\geq 2n$.

Now observe that $d(a^{n}b^{-n},h)=|b^{n}a^{-n}wc^{k}|$.
Let $x$ be the number of $a$'s at the start of $w$ and write $w=a^{x} w_{1}$. Then $w_{1}$ has length $j-x$ and does not start with the letter $a$. Also, the sum $k$ of the exponents in $w$ is at least $x-(j-x)$, so $x\leq \frac{j+k}{2}\leq \frac{j+|k|}{2}< n$. Hence, $b^{n}(a^{-1})^{n-x}w_{1}$ is a freely reduced word for $b^{n}a^{-n}\overline{w}$. Therefore, by Lemma \ref{Lem:length_in_P}, we have
\[d(a^{n}b^{-n},h)=\left|b^{n}\left(a^{-1}\right)^{n-x}w_{1}c^{k}\right|=2n-2x+j+|k|\geq 2n,\]
as required.

We have shown that any path contained in $B(2n)$ between $a^{n}b^{-n}$ and $ta^{n}b^{-n+1}$ has length at least $4n$. Since this holds for any positive integer $n$, the pair $(G,S_{2})$ does not satisfy MAC.\end{proof}

\section{Loop shortening properties}

In \cite{loop_shortening} Elder introduced the loop shortening and basepoint loop shortening properties as natural generalisations of the falsification by fellow traveller property (FFTP). Where FFTP gives a simple way to check if a word is a geodesic, each of the loop shortening properties gives a somewhat simple way to check if a word represents the identity in the group.

We define these loop shortening properties below. For convenience we describe the loops only by the vertices that they pass through. As discussed earlier this uniquely defines the loops in our cases as each pair of adjacent vertices in the graph we consider is joined by only one edge. Even without this assumption the definition would make sense if a {\em loop} of vertices is considered to mean a sequence of vertices where each is adjacent to the previous, and the final vertex in the sequence is the same as the starting vertex.

\begin{defn} Let $G$ be a group with finite generating set $S$. $(G,S)$ has the (synchronous) loop shortening property (LSP) if there is a constant $k$ such that for any loop $v_{0},v_{1},\ldots,v_{n}$ in $\Gamma(G,S)$ with $v_{0}=v_{n}$ and $n\geq 1$, there is a shorter loop $u_{0},u_{1},\ldots,u_{m}$ with $u_{m}=u_{0}$ such that $d(u_{j},v_{j})<k$ for each $j\leq m$, and $d(u_{m},v_{j})<k$ for $m\leq j\leq n$. In other words, the paths (synchronously) $k$-fellow travel. \end{defn}

\begin{defn} Let $G$ be a group with finite generating set $S$. $(G,S)$ has the (synchronous) basepoint loop shortening property (BLSP) if there is a constant $k$ such that for any loop $v_{0},v_{1},\ldots,v_{n}$ in $\Gamma(G,S)$ with $n\geq1$, there is a shorter loop $(v_{0}=u_{0}),u_{1},\ldots,u_{m}$ such that $d(u_{j},v_{j})<k$ for each $j\leq m$, and $d(u_{m},v_{j})<k$ for $m\leq j\leq n$. In other words, the paths (synchronously) $k$-fellow travel. \end{defn}

Elder also defined asynchronous versions of these two properties, which he proved to be equivalent to the synchronous versions defined above.

Note that the only difference between these two properties is that for the basepoint loop shortening property, the initial loop is around a basepoint which the shorter loop has to pass through, whereas for the loop shortening property no such restriction is imposed. Hence, it is clear that
\[\text{BLSP}\Rightarrow\text{LSP}.\]
Elder also showed that the basepoint loop shortening property is strictly stronger than almost convexity and strictly weaker than the falsification by fellow traveller property (FFTP), so we have the long implication chain
\[\text{FFTP}\Rightarrow\text{BLSP}\Rightarrow\text{AC}\Rightarrow\text{P(2)}\Rightarrow\text{M$'$AC}\Rightarrow\text{MAC}.\]

Elder asked two questions about the two loop shortening properties. The first is whether they are equivalent, and this remains an open problem. The second is whether either or both of these properties depend on the generating set. We have already shown that the group $G=F_{2}\times F_{2}$ satisfies FFTP with respect to one generating set, but fails MAC with another, which implies that BLSP depends on the generating set. Our final theorem settles the other half of this question, namely that the loop shortening property also depends on the generating set.

\begin{thm}The group $(G,S_{2})$ with presentation 
\[\langle a,b,c,t|ac=ca,bc=cb,act=tac,bct=tbc\rangle\]
does not satisfy the loop shortening property.
\end{thm}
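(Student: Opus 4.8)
The plan is to exhibit, for each candidate fellow--traveller constant $k$, a single loop admitting no strictly shorter $k$--fellow travelling loop; taking $n$ large compared with $k$ then defeats every $k$, so the loop shortening property fails. The loop I would use is a commutator--type loop pinned between two ``opposite'' copies of the valley exploited above. Writing $u=a^{n}b^{-n}$ and $u'=a^{-n}b^{n}$, where $u'=\sigma(u)$ for the Cayley--graph isometry $\sigma\colon a\mapsto a^{-1},b\mapsto b^{-1},c\mapsto c^{-1},t\mapsto t$ (which permutes $S_{2}$ and preserves $H$), both points lie in $H$, and I take $L_{n}$ to be the loop
\[1\to u\to ut\to u't\to u'\to 1,\]
that is, the word $a^{n}b^{-n}\,t\,b^{n}a^{-2n}b^{n}\,t^{-1}\,b^{-n}a^{n}$, whose middle segment $ut\to u't$ is a geodesic along the $t$--sheet. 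Since $u,u'\in H$ commute with $t$, this word represents the identity, and its length is $2n+1+4n+1+2n=8n+2$. Crucially, the two $t$--edges of $L_{n}$ sit at the valley apexes $u$ and $u'$, which point in opposite directions from the origin, so $d(u,u')=|u|+|u'|=4n$.

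Let $U=u_{0},\dots,u_{m}$ be any loop that synchronously $k$--fellow travels $L_{n}$; I want to force $m\ge 8n+2$. First, $U$ must cross between the two sheets. Indeed $L_{n}$ visits $ut$ and $u't$, both at distance $2n+1$ from the origin and on the $t$--sheet, whereas the interior points of the $t$--segment (such as $ta^{n}b^{-n+i}$) stay far from the $1$--sheet. Since $U$ starts near the origin on the $1$--sheet and must come within $k$ of $ut$ and of $u't$ at the corresponding indices, it has an ``up'' crossing edge $\{g,gt\}$ and a ``down'' crossing edge $\{g',g't\}$ with $g,g'\in H$. The rigidity step is that any crossing carrying $U$ up to the level of $ut$ (resp.\ $u't$) must occur at a vertex $g$ (resp.\ $g'$) of $H$ lying within $O(k)$ of $u$ (resp.\ $u'$); the estimate established above (that $d(a^{n}b^{-n},h)\ge 2n$ whenever $h\in H$ and $|h|\le 2n-1$, together with its $\sigma$--image at $u'$) then forces $|g|,|g'|\ge 2n$.

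With the crossings located, the length bound is a triangle--inequality computation. The portion of $U$ on the $t$--sheet runs from $gt$ to $g't$, so has length at least $d(gt,g't)=d(g,g')$; the two $1$--sheet portions join $u_{0}$ to $g$ and $g'$ to $u_{0}$, of total length at least $d(u_{0},g)+d(g',u_{0})\ge d(g,g')$. Adding the two crossing edges,
\[m\ \ge\ d(u_{0},g)+d(g,g')+d(g',u_{0})+2\ \ge\ 2\,d(g,g')+2,\]
so everything reduces to the sharp estimate $d(g,g')\ge 4n$, whence $m\ge 8n+2=|L_{n}|$ and unshortenability follow.

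The main obstacle is precisely this last estimate, and in particular eliminating the $O(k)$ slack that crude bounds leave. Because $g,g'\in H$ have the form $wc^{\kappa}$ with $\kappa$ the exponent sum of $w$, their $c$--exponents are coupled to the free part, and one cannot read $d(g,g')\ge 4n$ off a single homomorphism such as $\phi_{a}-\phi_{b}$ (which yields only $4n-2k$). I would prove $d(g,g')\ge 4n$ by the same leading--letter bookkeeping as in the MAC argument: writing $g=w_{1}c^{\kappa_{1}}$ and $g'=w_{2}c^{\kappa_{2}}$, the words $w_{1}$ (near $a^{n}b^{-n}$) and $w_{2}$ (near $a^{-n}b^{n}$) share no initial letter, so $d(g,g')=|w_{1}|+|w_{2}|+|\kappa_{1}-\kappa_{2}|$ with no free cancellation, and any reduction of $|w_{i}|$ below $2n$ forces a compensating $c$--exponent, keeping the total at least $4n$. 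The remaining technical point is that $U$ may cross the sheets more than twice (for instance it may dip to the $1$--sheet near the low point $t$ of the middle segment); but extra crossings only lengthen the $t$--sheet and $1$--sheet portions, so the two triangle bounds survive and the conclusion is unchanged.
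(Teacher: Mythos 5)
Your loop is a legitimate candidate: the word $a^{n}b^{-n}\,t\,b^{n}a^{-2n}b^{n}\,t^{-1}\,b^{-n}a^{n}$ does represent the identity (since $b^{n}a^{-2n}b^{n}$ has zero exponent sum, it lies in $H$ and commutes with $t$), its length is $8n+2$, and the distances between your marked points sum exactly to $8n+2$, so unshortenability is at least plausible. But there are two genuine gaps, and they sit exactly at the hard points. First, your ``rigidity step'' --- that the up and down crossings must occur at $g,g'\in H$ within $O(k)$ of $u$ and $u'$ --- is asserted, not proved, and as stated it is false. A crossing vertex $g\in H$ has height $0$ under the homomorphism $h$ with $h(a)=h(b)=e$, $h(c)=e^{-1}$, $h(t)=1$, so synchronous fellow travelling only forces $g$ to lie within $k$ of a \emph{low-height} point of $L_{n}$; and $L_{n}$ has low-height points not only near $u$ and $u'$ but also near the basepoint and --- fatally --- at the dip, since the midpoint of your middle segment is $utb^{n}a^{-n}=t$, at distance $1$ from the origin. (Also $u_{0}$ itself may lie in the $t$-sheet, since $d(1,t)=1\leq k$, a configuration your decomposition does not cover.) The localization can probably be rescued by a window argument (the \emph{last} up-crossing before the high-height interval of the middle segment must fall in the window near $u$, because $U$ must be back on the $1$-sheet when $L_{n}$ is at $a^{n}$), but you have not done this.

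Second, and more seriously, the multi-crossing case is dismissed with ``extra crossings only lengthen the portions, so the two triangle bounds survive,'' but there your reduction to $d(g,g')\geq 4n$ collapses: if $U$ dips to the $1$-sheet near $t$ (down at $g_{2}$, up at $g_{3}$, with $|g_{2}|,|g_{3}|=O(k)$), the $t$-sheet portion splits into two arcs of length roughly $2n$ each, the skeleton becomes five legs plus four crossing edges, and crude triangle bounds give only $8n+4-O(k)$ --- with $O(k)$ slack in every leg, which is exactly the slack you yourself identified as the obstacle in the two-crossing case. Killing that slack is the real content of the theorem, and it is what the paper's proof supplies by a different mechanism: it never localizes crossings at all, but tracks signed correction terms $x_{i},y_{i},z_{i}$ through two homomorphisms (the retraction $f$ killing $t$ and the height map $h$), and chooses the loop $a^{2k}b^{-4k}a^{2k}ta^{-2k}b^{4k}a^{-2k}t^{-1}$ so that all leftovers have the form $|y|+y$ or $|z-z'|-(z-z')\geq 0$ and cancel exactly. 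Note the structural reason your loop fights you: the paper's marked points $u_{2},u_{3}$ sit at height $-k$, strictly \emph{below} crossing level, which simultaneously forces the crossing vertices $v_{5},v_{6}$ into the middle arcs (yielding the sharp $2k+1$ bounds) and generates correction terms with the right signs; your waypoints $ut,u't$ sit at height $0$, the same level as $H$, and your middle segment dips to height $0$ beside the basepoint --- which is precisely what drives your argument into the fragile localization and the unhandled extra-crossing cases. As written, the proposal asserts rather than proves the two estimates on which everything depends.
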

\begin{proof}
Let $k\in\mathbb{Z}_{>0}$. We will show that there is a loop $\ell$ in $\Gamma(G,S_{2})$ such that there is no shorter loop $\ell'$ in $\Gamma(G,S_{2})$ which $k$-fellow travels with $\ell$. Hence this will show that $\Gamma(G,S_{2})$ does not satisfy the loop shortening property. Let $\ell$ be the loop given by the word
\[w=a^{2k}b^{-4k}a^{2k}ta^{-2k}b^{4k}a^{-2k}t^{-1}.\]
The fact that $\overline{w}=1$, follows from $a^{2k}b^{-4k}a^{2k}\in H$, so this does indeed form a loop. Now let $\ell'$ be a loop which $k$-fellow travels with $\ell$. Then we just need to show that the length of $\ell'$ is at least the length of $\ell$, which is $16k+2$. Since the four vertices
\[u_{1}=a^{k},~~u_{2}=a^{2k}b^{-4k}a^{k},~~u_{3}=ta^{2k}b^{-4k}a^{k}~\text{ and }~u_{4}=ta^{k},\]
appear in $\ell$ in that order, there must be vertices $v_{1}$, $v_{2}$, $v_{3}$ and $v_{4}$ appearing in $\ell'$ in that order which satisfy $d(u_{i},v_{i})\leq k$ for each $i\in\{1,2,3,4\}$(See Figure \ref{fig:shorten}). Hence it suffices to prove that
\[d(v_{1},v_{2})+d(v_{2},v_{3})+d(v_{3},v_{4})+d(v_{4},v_{1})\geq 16k+2.\]
For each $i$, let $p_{i}$ be a path of minimal length from $u_{i}$ to $v_{i}$, let $w_{i}$ be the corresponding word and let $g_{i}=\overline{w_{i}}$ be the corresponding group element. So $|w_{i}|\leq k$ and $g_{i}=u_{i}^{-1}v_{i}$. For each $i\in\{1,2,3,4\}$. Let $x_{i}$, $y_{i}$, $z_{i}$ be the sums of the powers of $a$, $b$ and $c$, respectively in the word $w_{i}$. For example this means that $x_{1}$ is the number of occurences of $a$ in $w_{1}$ minus the number of occurences of $a^{-1}$. We will show the following four inequalities, from which the desired result follows:
\[d(v_{1},v_{2})\geq 6k-x_{1}+x_{2}+|y_{1}|+|y_{2}|+|z_1-z_2|,\]
\[d(v_{2},v_{3})\geq 2k+1-x_{2}-x_{3}-y_{2}-y_{3}+z_{2}+z_{3},\]
\[d(v_{3},v_{4})\geq 6k-x_{4}+x_{3}+|y_{4}|+|y_{3}|+|z_4-z_3|,\]
\[d(v_{4},v_{1})\geq 2k+1+x_{1}+x_{4}+y_{1}+y_{4}-z_1-z_4.\]

\begin{figure}[htbp]
   \centering
   \includegraphics[width=5in]{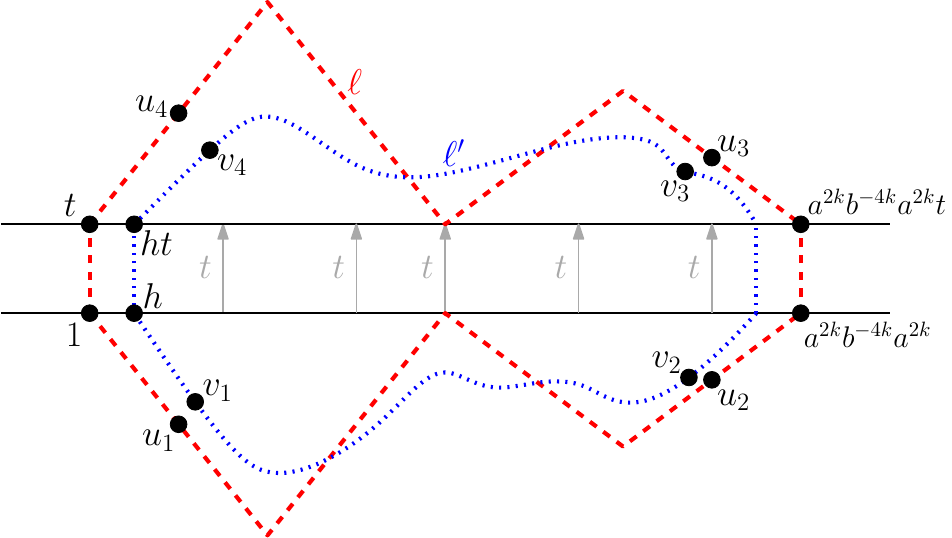} 
   \caption{The loop $\ell$ shown by a dashed red line and a $k$-fellow travelling loop $\ell'$ shown by a dotted blue line.}
   \label{fig:shorten}
\end{figure}

We will start by proving the first inequality. Let $w$ be a word of minimal length from $v_{1}$ to $v_{2}$. So \[\overline{w}=g_{1}^{-1}u_{1}^{-1}u_{2}g_{2}=g_{1}^{-1}a^kb^{-4k}a^{k}g_{2}.\]
Consider the quotient map $f:G\to P$ defined above Lemma \ref{Lem:length_in_P} by $f(a)=a$, $f(b)=b$, $f(c)=c$ and $f(t)=1$. Since $c$ commutes with $a$ and $b$, we can write $f(g_{j})=f(\overline{w_{j}})=c^{z_{j}}\overline{r_{j}}$ for each $j$, where $r_{j}$ is a freely reduced word over the alphabet $\{a,b,a^{-1},b^{-1}\}$. The application of $f$ does not change the sum of the powers of $a$ or $b$, nor does the free reduction of the word, so $x_{j}$ and $y_{j}$ are the sums of the powers of $a$ and $b$, respectively, in $r_{j}$. Also note that
\[k\geq|w_{j}|\geq|r_{j}|.\]
Now we return to bounding the distance $d(v_{1},v_{2})$:
\[d(v_{1},v_{2})=|w|\geq |f(\overline{w})|=
|f(g_{1})^{-1}a^{k}b^{-4k}a^{k}f(g_{2})|
=|c^{z_{2}-z_{1}}\overline{r_{1}}^{-1}a^{k}b^{-4k}a^{k}\overline{r_{2}}|.\]
Let $s_{1}$ and $s_{2}$ be freely reduced words for $\overline{r_{1}}^{-1}a^{k}$ and $a^{k}\overline{r_{2}}$ respectively. Since $r_{1}$ has length at most $k$, the word $s_{1}$ must either be the empty word or end with $a$. Similarly, $s_{2}$ must be the empty word or start with $a$. In any case, $s_{1}\left(b^{-1}\right)^{4k}s_{2}$ is a freely reduced word, so by Lemma \ref{Lem:length_in_P}, the length $|w|=|c^{z_{2}-z_{1}}s_{1}\left(b^{-1}\right)^{4k}s_{2}|$ is equal to $|s_{1}|+|s_{2}|+4k+|z_{1}-z_{2}|$. Now, the sums of the powers of $a$ in $s_{1}$ and $s_{2}$ are $k-x_{1}$ and $k+x_{2}$ respectively, and the sums of the powers of $b$ in $s_{1}$ and $s_{2}$ are $-y_{1}$ and $y_{2}$ respectively. Hence, $|s_{1}|\geq k-x_{1}+|y_{1}|$ and $|s_{2}|\geq k+x_{2}+|y_{2}|$. Therefore,
\[d(v_{1},v_{2})\geq |c^{z_{2}-z_{1}}s_{1}b^{-4k}s_{2}|
\geq 6k-x_{1}+x_{2}+|y_{1}|+|y_{2}|+|z_1-z_2|.\]

We can bound $d(v_{3},v_{4})$ similarly as
\[d(v_{3},v_{4})=|g_{4}^{-1}a^{k}b^{-4k}a^{k}g_{3}|\geq|f(g_{4})^{-1}a^{k}b^{-4k}a^{k}f(g_{3})|\]
So the same argument implies
\[d(v_{3},v_{4})\geq 6k-x_{4}+x_{3}+|y_{4}|+|y_{3}|+|z_4-z_3|.\]

Now we will consider the distance $d(v_{1},v_{4})$. Let $w_{5}$ be a word of minimal length from $v_{1}$ to $v_{4}$. Then $w_{1}w_{5}w_{4}^{-1}$ forms a path $p$ from $u_{1}$ to $u_{4}$. Hence by Lemma \ref{Lem:sheet_jump}, there must be some $h\in H$ such that $p$ passes through $h$ and $ht$ in that order. It will be convenient to consider the homomorphism $\pi:P\to \langle e\rangle,$ defined by $\pi(a)=\pi(b)=e$, $\pi(c)=e^{-1}$, which we note is well defined because the definition is consistent with the relations $ac=ca$ and $bc=cb$ of $P$. Then $H$ is the  kernel of $\pi$, so $\pi(h)=1$. Since $\pi(h^{-1}u_{1})=\pi(u_{1})=e^{k}$, the length of any word representing $h^{-1}u_{1}$ must have length at least $k$ i.e., $d(u_{1},h)\geq k$. But the section of $p$ joining $u_{1}$ and $v_{1}$ has length at most $k$. Hence $h$ lies on the section of $p$ between $v_{1}$ and $u_{4}$, Hence $ht$ also lies in this section. Similarly, since $\pi((ht)^{-1}u_{4})=e^{k}$, the points $h$ and $ht$ lie on the section of $p$ between $u_{1}$ and $v_{4}$, so in fact they lie between $v_{1}$ and $v_{4}$. Hence, the distance
\[d(v_{1},v_{4})=d(v_{1},h)+1+d(v_{4},ht).\]
To bound $d(v_{1},h)$, we observe that  $\pi(h^{-1}v_{1})=\pi(v_{1})=\pi(u_{1})\pi(\overline{w_{1}})=e^{k}e^{x_{1}}e^{y_{1}}e^{-z_{1}}=e^{k+x_{1}+y_{1}-z_{1}}$. Therefore,
 \[d(v_{1},h)\geq k+x_{1}+y_{1}-z_{1}.\] Similarly, \[d(v_{4},ht)\geq k+x_{4}+y_{4}-z_{4}.\] Adding these inequalities yields
\[d(v_{4},v_{1})\geq 2k+1+x_{1}+x_{4}+y_{1}+y_{4}-z_1-z_4.\]
Now since $\pi(u_{2})=\pi(u_{3})=e^{-k}$, we can deduce the final inequality,
\[d(v_{2},v_{3})\geq  2k+1-x_{2}-x_{3}-y_{2}-y_{3}+z_{2}+z_{3}.\]
in the same way.
So we now have lower bounds for all four of the distances $d(v_{1},v_{2})$, $d(v_{2},v_{3})$, $d(v_{3},v_{4})$ and $d(v_{4},v_{1})$.
Therefore, the length of the loop $\ell'$ is at least
\begin{align*}&d(v_{1},v_{2})+d(v_{2},v_{3})+d(v_{3},v_{4})+d(v_{4},v_{1})\\
\geq& 6k-x_{1}+x_{2}+|y_{1}|+|y_{2}|+|z_1-z_2|
+2k+1-x_{2}-x_{3}-y_{2}-y_{3}+z_{2}+z_{3}\\
+& 6k-x_{4}+x_{3}+|y_{4}|+|y_{3}|+|z_4-z_3|
+2k+1+x_{1}+x_{4}+y_{1}+y_{4}-z_1-z_4\\
=&16k+2+|y_{1}|+y_{1}+|y_{2}|-y_{2}+|y_{3}|-y_{3}+|y_{4}|+y_{4}\\
+&|z_{1}-z_{2}|-(z_{1}-z_{2})+|z_{4}-z_{3}|-(z_{4}-z_{3})\\
\geq&16k+2,\end{align*}
which is the same as the length of $\ell$. This shows that any loop $\ell'$ that $k$-fellow travels with $\ell$ has length at least that of $\ell$. Since this situation occurs for any integer $k>0$, the pair $(G,S_{2})$ does not enjoy the loop shortening property.\end{proof}

\section{Acknowledgements}
We would like to thank the anonymous referee whose comments greatly improved the readability of the paper.

\end{document}